\newcommand{\ud}{\mathrm{d}}
\newcommand{\CR}{\mathds{R}}
\newcommand{\depp}[2]{\textstyle\frac{\partial\emph{$#1$}}{\partial{\emph{$#2$}}}}
\newcommand{\escalar}[2]{\left\langle{\emph{$#1$}},{\emph{$#2$}}\right\rangle}
\newcommand{\rrightarrow}{\mathrel{\mathrlap{\rightarrow}\mkern1mu\rightarrow}}
\newenvironment{Cenumerate}{
\begin{enumerate}
  \setlength{\itemsep}{1pt}
  \setlength{\parskip}{0pt}
  \setlength{\parsep}{0pt}
}{\end{enumerate}}
\theoremstyle{plain}
\newtheorem{lemma}{Lemma}
\newtheorem{proposition}{Proposition}
\newtheorem{corollary}{Corollary}
\theoremstyle{definition}
\newtheorem{remark}{Remark}
\begin{document}


\title{Symplectic foliations induced by harmonic forms on 3-manifolds}

\author{Romero Solha}\affiliation{Department of Geometry and Topology, Faculty of Sciences, University of Granada. Postal address: Departamento de Geometría y Topología, Facultad de Ciencias - Universidad de Granada, Avenida Fuente Nueva S.N., Granada, Spain (postal code 18071). email: 
romerosolha@gmail.com.}\date{\today}
\begin{abstract}
This article details a construction of symplectic foliations on $3$-dimensional orientable riemannian manifolds from harmonic forms; and how it suggests a topological approach to Poisson's equation and newtonian gravity. 
\end{abstract}
\maketitle


\section{Nonvanishing harmonic 2-form}

On a $3$-dimensional orientable riemannian manifold $(M,\mathrm{g})$ the Hodge star operators $\star:\Omega^k(M)\to\Omega^{3-k}(M)$ satisfy $\star^2=\mathds{1}$ and take $2$-forms to $1$-forms. Using them, one can express the riemannian volume by $\star 1\in\Omega^3(M)$.  

\begin{lemma}\label{hodgenorm1form}
Given any $\alpha\in\Omega^2(M)$, and the unique solution $X\in\mathfrak{X}(M)$ to 
\begin{equation*}
\mathrm{g}(X,\boldsymbol{\cdot})=\star\alpha \ , 
\end{equation*}it holds
\begin{equation*}
\alpha\wedge\star\alpha=\mathrm{g}(X,X)\star1 
\end{equation*}and 
\begin{equation*}
\alpha=\imath_X\star1 \ .  
\end{equation*}
\end{lemma}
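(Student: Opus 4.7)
The plan is to deduce both identities from two standard pointwise facts of Hodge theory in dimension three: that $\star^2=\mathds{1}$, and that the induced inner product on forms satisfies $\beta\wedge\star\beta=\langle\beta,\beta\rangle\,\star 1$ for every form $\beta$, with $\star$ itself being a pointwise isometry for this inner product. These two ingredients are enough to reduce each statement to a short local computation.

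First I would address the second identity. Applying $\star$ to the defining relation $\mathrm{g}(X,\boldsymbol{\cdot})=\star\alpha$ and using $\star^2=\mathds{1}$ gives $\alpha=\star\mathrm{g}(X,\boldsymbol{\cdot})$, so it suffices to verify the general identity $\iota_X\star 1=\star\mathrm{g}(X,\boldsymbol{\cdot})$. Both sides are $C^\infty(M)$-linear in $X$, so the check can be carried out in a local oriented orthonormal coframe $\{e^1,e^2,e^3\}$ with $\star 1=e^1\wedge e^2\wedge e^3$: writing $X=X^i e_i$, so that $\mathrm{g}(X,\boldsymbol{\cdot})=X^i e^i$, a direct computation shows that both $\iota_X\star 1$ and $\star\mathrm{g}(X,\boldsymbol{\cdot})$ reduce to $X^1\,e^2\wedge e^3+X^2\,e^3\wedge e^1+X^3\,e^1\wedge e^2$.

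For the first identity, I would combine the above with $\alpha\wedge\star\alpha=\langle\alpha,\alpha\rangle\,\star 1$. Because $\star$ is a pointwise isometry on forms, $\langle\alpha,\alpha\rangle=\langle\star\alpha,\star\alpha\rangle$; and since $\star\alpha=\mathrm{g}(X,\boldsymbol{\cdot})$ is the metric dual $1$-form of $X$, the musical isomorphism gives $\langle\star\alpha,\star\alpha\rangle=\mathrm{g}(X,X)$. Putting these together yields $\alpha\wedge\star\alpha=\mathrm{g}(X,X)\,\star 1$.

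The only real obstacle is the sign book-keeping in the orthonormal-frame verification of $\iota_X\star 1=\star\mathrm{g}(X,\boldsymbol{\cdot})$. Working in an \emph{orthonormal} coframe, rather than a generic chart, is what keeps the computation transparent, since no metric coefficients intervene and the identification reduces to checking the three basis terms.
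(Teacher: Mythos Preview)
Your proof is correct. The route differs from the paper's, though. The paper establishes a single auxiliary identity
\[
\beta(X)\,\star 1 \;=\; \beta\wedge\star(\star\alpha)\;=\;\beta\wedge\alpha
\qquad\text{for every }\beta\in\Omega^1(M),
\]
which follows directly from the defining property $\beta\wedge\star\gamma=\langle\beta,\gamma\rangle\,\star 1$ together with $\star^2=\mathds{1}$; specialising to $\beta=\star\alpha$ gives the first identity, while contracting $\beta\wedge\star 1=0$ by $X$ and using the auxiliary identity yields $\beta\wedge(\alpha-\imath_X\star 1)=0$ for all $\beta$, hence the second. Your argument instead treats the two identities separately: an orthonormal-coframe computation for $\imath_X\star 1=\star\mathrm{g}(X,\boldsymbol{\cdot})$, and the pointwise isometry of $\star$ for the norm identity. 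Both are perfectly valid; the paper's version is coordinate-free and slightly more unified, whereas your approach invokes standard Hodge-theoretic facts (isometry of $\star$, the explicit action of $\star$ on an orthonormal coframe) that many readers may find more immediately recognisable.
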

\begin{proof}
By the definition of the Hodge star operator applied to $1$-forms, for any $\beta\in\Omega^1(M)$, 
\begin{equation*}
\beta(X)\star1=\beta\wedge\star(\star\alpha)=\beta\wedge\alpha \ .
\end{equation*}Which implies  
\begin{equation*}
\mathrm{g}(X,X)\star1=(\star\alpha)\wedge\alpha=\alpha\wedge\star\alpha \ , 
\end{equation*}and, since $\beta\wedge\star1$ must be zero for it is a $4$-form,
\begin{equation*}
0=\imath_X(\beta\wedge\star1)=\beta(X)\star1-\beta\wedge\imath_X\star1=\beta\wedge(\alpha-\imath_X\star1) \ .
\end{equation*}
\end{proof}

If $\omega\in\Omega^2(M)$ is both closed and coclosed, 
\begin{equation*}
\left\{\begin{array}{c}
\ud\omega=0 \\
\star\ud\star\omega=0
\end{array}\right. \  , 
\end{equation*}and nonvanishing, then it provides a symplectic foliation on $M$. Unfortunately, even under this strong hypothesis ($\omega$ never vanishing pointwise), there is no consensus in the literature regarding the nomenclature of this geometric structure. Some authors call it cosymplectic, others might treat it as a symplectic vector bundle or a Poisson manifold. To avoid adhering to a nomenclature, the article limits itself to the description of  features of the underlying geometry when needed; the reader might want to consult \cite{GMP11}.

\begin{lemma}\label{geometrygravity}
Assuming $\omega\in\Omega^2(M)$ to be closed, coclosed, and nonvanishing, the kernel of $\star\omega$ defines a foliation in which $\omega$ restricts to a symplectic structure on each leaf. And together with the kernel of $\omega$,  
\begin{equation*}
\mathrm{ker}(\omega):=\mathrm{ker}\{\mathfrak{X}(M)\ni X\mapsto\imath_X\omega\in\Omega^1(M)\} \ , 
\end{equation*}the following orthogonal decomposition is true:
\begin{equation*}
\mathfrak{X}(M)=\mathrm{ker}(\omega)\oplus\mathrm{ker}(\star\omega) \ .
\end{equation*}
\end{lemma}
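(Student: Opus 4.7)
My plan is to decode the hypotheses through Lemma~\ref{hodgenorm1form} and then verify each assertion by a direct computation with the vector field it provides. Applying that lemma to $\alpha=\omega$ yields $X\in\mathfrak{X}(M)$ with $\mathrm{g}(X,\boldsymbol{\cdot})=\star\omega$ and $\omega=\imath_X\star1$; since $\omega$ is nonvanishing, so are $\star\omega$ and $X$.

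For the integrability of $\ker(\star\omega)$ I would invoke Frobenius on the corank-one distribution cut out by the nowhere-zero $1$-form $\star\omega$. The coclosedness $\star\ud\star\omega=0$ is equivalent to $\ud(\star\omega)=0$, so $(\star\omega)\wedge\ud(\star\omega)$ vanishes trivially and the distribution is tangent to a foliation.

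For the orthogonal decomposition I would note that $\ker(\star\omega)$ equals $X^{\perp}$ by the metric duality $\mathrm{g}(X,\boldsymbol{\cdot})=\star\omega$, whereas $\imath_Y\omega=\imath_Y\imath_X\star1$ vanishes exactly when $Y$ is parallel to $X$ (since $\star1$ is a nondegenerate $3$-form and $X\neq0$), so $\ker(\omega)$ is the line spanned by $X$. The required splitting is then the pointwise orthogonal one $T_pM=\mathrm{span}(X)\oplus X^{\perp}$.

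To finish I must show $\omega$ restricts to a symplectic form on each leaf; closedness descends from $\ud\omega=0$, and since the leaves are $2$-dimensional only pointwise nondegeneracy, i.e.\ $\omega|_{X^\perp}\neq0$, needs checking. Because $\ker(\omega)=\mathrm{span}(X)$ as above, $\omega$ must be nondegenerate on every $2$-plane transverse to $X$, in particular on $X^\perp$; equivalently, $\omega\wedge\star\omega=\mathrm{g}(X,X)\star1$ from Lemma~\ref{hodgenorm1form} is a nowhere-zero top form, ruling out any degeneracy. The only real obstacle is the book-keeping: once one reads $\omega$ as $\imath_X\star1$ and the coclosedness as $\ud(\star\omega)=0$, every assertion collapses to Frobenius or to elementary linear algebra on $\mathrm{span}(X)\oplus X^\perp$.
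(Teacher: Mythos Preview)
Your proof is correct and follows essentially the same route as the paper: both invoke Lemma~\ref{hodgenorm1form} to produce the metric-dual vector field (the paper calls it $R=-X$), use $\ud(\star\omega)=0$ for integrability, and exploit $\omega\wedge\star\omega=\mathrm{g}(X,X)\star1$ for nondegeneracy. The only cosmetic difference is that you read off $\ker(\omega)=\mathrm{span}(X)$ directly from $\omega=\imath_X\star1$ and deduce nondegeneracy on $X^\perp$ from that, whereas the paper first proves nondegeneracy on the leaves via an interior-product computation with $\omega\wedge\star\omega$ and only then infers the rank of $\ker(\omega)$.
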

\begin{proof}
Considering the dual to $-\star\omega$ with respect to $\mathrm{g}$, the unique $R\in\mathfrak{X}(M)$ solving
\begin{equation*}
\mathrm{g}(R,\boldsymbol{\cdot})=-\star\omega \ , 
\end{equation*}lemma \ref{hodgenorm1form} together with the hypothesis of $\omega$ does not vanishing pointwise yield that the same holds for $\star\omega$, $R$, and $\omega\wedge\star\omega$. They also assure that the rank of the kernel of both forms are constant, $R\in\mathrm{ker}(\omega)$, and 
\begin{equation*}
\mathrm{ker}(\star\omega)=\{Y\in\mathfrak{X}(M) \ ; \ \mathrm{g}(R,Y)=0\} \ .
\end{equation*}

Because $\star\omega$ is closed, for any pair of vector fields $X,Y\in\mathrm{ker}(\star\omega)$,
\begin{align*}
0=\ud\star\omega(X,Y)&=X(\star\omega(Y))-Y(\star\omega(X))-\star\omega([X,Y]) \nonumber \\ 
&=-\star\omega([X,Y]) \ , 
\end{align*}showing that $\mathrm{ker}(\star\omega)$ is integrable and defines a foliation which the leaves are surfaces (because it is defined by the kernel of a nonvanishing $1$-form on a $3$-dimensional manifold). 

As for the symplectic nature of the leaves, assuming $X\in\mathrm{ker}(\star\omega)$ to be such that, for all $Y\in\mathrm{ker}(\star\omega)$, it holds $\omega(X,Y)=0$; thus
\begin{align*}
\imath_Y\circ\imath_X(\omega\wedge\star\omega)&=\imath_Y(( \imath_X\omega)\wedge\star\omega+\star\omega(X)\omega) \nonumber \\
&=\imath_Y((\imath_X\omega)\wedge\star\omega) \nonumber \\
&=\omega(X,Y)\star\omega-\star\omega(Y)\imath_X\omega=0 \ , 
\end{align*}making it impossible to exist $Z\in\mathfrak{X}(M)$ satisfying 
\begin{equation*}
\omega\wedge\star\omega(X,Y,Z)\neq 0 \ .
\end{equation*}Ergo, $X$ must be the zero vector field and $\omega|_{\mathrm{ker}(\star\omega)}$ is nondegenerate. Finally, the hypothesis of $\omega$ being closed secures that it restricts to a symplectic structure on each leaf of the foliation defined by the kernel of its Hodge dual. 

What is missing is the direct sum condition of the last claim. If $X\in\mathfrak{X}(M)$ belongs to the intersection $\mathrm{ker}(\omega)\cap\mathrm{ker}(\star\omega)$, the nondegeneracy of $\omega|_{\mathrm{ker}(\star\omega)}$ implies that $X=0$, as well as that the rank of $\mathrm{ker}(\omega)$ equals $1$ (because it is defined by the kernel of a nonvanishing $2$-form which is nondegenerate when restricted to $2$-dimensional submanifolds foliating a $3$-dimensional manifold).
\end{proof}


\section{Symplectic mapping torus}

An application of the previous lemma shows that a compact orientable $3$-dimensional manifold $M$ fibres over $S^1$ if and only if $S^1\times M$ has an $S^1$-invariant symplectic structure (which is related to Taubes's conjecture \cite{Taubesconjecture}), or it is a symplectic mapping torus; i.e. there exist a $2$-dimensional symplectic manifold $(\Sigma,\sigma)$ and a diffeomorphism $\psi:\Sigma\to\Sigma$ such that $\psi^*(\sigma)=\sigma$, and $M$ is $\Sigma\times [0,1]$ after identifying $(p,0)\in \Sigma\times [0,1]$ to $(\psi(p),1)\in\Sigma\times [0,1]$. 

Any generator of $H^1(S^1)$ can be pulled back to a nonvanishing closed $1$-form $\theta$, by the projection mapping of the fibration $M\rrightarrow S^1$. Since a nonvanishing $1$-form is transitive, Calabi \cite{Calabiharmonic} provides a riemannian structure such that $\star\theta$ is a closed $2$-form (it satisfies the hypothesis of lemma \ref{geometrygravity}); and results from \cite{Liarticle} justify the claim. 


\section{Area and mean curvature}  

Unless otherwise stated, the hypothesis of lemma \ref{geometrygravity} are assumed hereafter. 

\begin{proposition}\label{omegaareaform}
If $\Sigma\subset M$ is a leaf of $\mathrm{ker}(\star\omega)$, then $\omega$ restricted to it coincides with the induced area form from the restriction of $\mathrm{g}$ to $\Sigma$ multiplied by the norm of $R$ (which is equivalent to the norm of $\omega$). 
\end{proposition}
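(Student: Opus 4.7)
The strategy is to rewrite $\omega$ as a contraction of the Riemannian volume with the vector field $R$, and then exploit that $R$ is orthogonal to every leaf, so contracting $\star 1$ with it produces a multiple of the induced area form on $\Sigma$.

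I would begin by applying Lemma~\ref{hodgenorm1form} with $\alpha=\omega$. Because $R$ is defined by $\mathrm{g}(R,\boldsymbol{\cdot})=-\star\omega$, the vector field the lemma calls $X$ is exactly $-R$, so the two conclusions of the lemma specialise to
\begin{equation*}
\omega=-\imath_R\star 1\qquad\text{and}\qquad\omega\wedge\star\omega=\mathrm{g}(R,R)\star 1\ .
\end{equation*}
Since $\omega\wedge\star\omega=\|\omega\|^2\star 1$ is the defining identity of the pointwise Hodge norm, the second equality already gives $\|\omega\|=\|R\|$, settling the parenthetical claim.

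Next I would invoke the orthogonal characterisation $\mathrm{ker}(\star\omega)=\{Y\in\mathfrak{X}(M):\mathrm{g}(R,Y)=0\}$ that is proved inside Lemma~\ref{geometrygravity}. This says $R$ is pointwise $\mathrm{g}$-orthogonal to $T\Sigma$, so $N:=-R/\|R\|$ is a unit normal. Orienting $\Sigma$ so that a positively oriented basis $(Y_1,Y_2)$ of $T_p\Sigma$ completes $N$ to a positively oriented orthonormal frame of $T_pM$, an evaluation on such a frame yields the standard hypersurface identity $\imath_N(\star 1)|_\Sigma=\mathrm{area}_\Sigma$. Multiplying by $\|R\|$ then gives
\begin{equation*}
\omega|_\Sigma=\imath_{-R}(\star 1)|_\Sigma=\|R\|\,\mathrm{area}_\Sigma\ .
\end{equation*}

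The real content sits in Lemma~\ref{hodgenorm1form} and in the orthogonal decomposition of Lemma~\ref{geometrygravity}, both of which are already available; the only mild obstacle is fixing the leaf orientation compatibly with the ambient one and with the sign convention $\mathrm{g}(R,\boldsymbol{\cdot})=-\star\omega$, but this is pure bookkeeping, since the nondegeneracy of $\omega|_\Sigma$ established in Lemma~\ref{geometrygravity} forces a unique choice that makes $\omega|_\Sigma$ a positive multiple of $\mathrm{area}_\Sigma$.
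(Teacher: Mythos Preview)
Your proposal is correct and follows essentially the same route as the paper: both invoke Lemma~\ref{hodgenorm1form} to obtain $\omega=-\imath_R\star 1$ and $\mathrm{g}(R,R)=\|\omega\|^2$, identify $-R/\sqrt{\mathrm{g}(R,R)}$ as the unit normal compatible with the orientation induced by $\omega$, and read off $\omega|_\Sigma=\|R\|\,\mathrm{area}_\Sigma$. Your version is slightly more explicit about the orientation bookkeeping, but the argument is the same.
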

\begin{proof}
Lemma \ref{hodgenorm1form} provides $\omega\wedge\star\omega=\mathrm{g}(R,R)\star 1$ as well as $\omega=-\imath_R\star 1$; therefore, the normal  vector field to $\Sigma$ inducing the same orientation as $\omega$ is $-\mathrm{g}(R,R)^{-1/2}R$, whilst the induced area form from the restriction of $\mathrm{g}$ to $\Sigma$ is $-\mathrm{g}(R,R)^{-1/2}\imath_R\star 1$.
\end{proof}

\begin{corollary}\label{positivemass2}
Any compact (without boundary) and oriented leaf $\Sigma\subset M$ of $\mathrm{ker}(\star\omega)$ will satisfy $\int_\Sigma\omega>0$. 
\end{corollary}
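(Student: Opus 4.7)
The plan is to reduce the statement to a direct application of Proposition \ref{omegaareaform}. First I would note that, since $\omega$ is nonvanishing, Lemma \ref{hodgenorm1form} applied to $\omega$ produces a nowhere-zero vector field $R$ with $\mathrm{g}(R,\boldsymbol{\cdot})=-\star\omega$; in particular, $\mathrm{g}(R,R)$ is a strictly positive continuous function on all of $M$, and hence on any leaf $\Sigma$.

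Next I would invoke Proposition \ref{omegaareaform} to rewrite the restriction of $\omega$ to $\Sigma$ as $\omega|_\Sigma=\mathrm{g}(R,R)^{1/2}\,\mathrm{dA}_\Sigma$, where $\mathrm{dA}_\Sigma$ denotes the area form induced by $\mathrm{g}|_\Sigma$ together with the orientation singled out by the unit normal $-\mathrm{g}(R,R)^{-1/2}R$. The integral then becomes
\begin{equation*}
\int_\Sigma\omega=\int_\Sigma\mathrm{g}(R,R)^{1/2}\,\mathrm{dA}_\Sigma \ ,
\end{equation*}and compactness of $\Sigma$ (without boundary) guarantees that this is a finite integral of a continuous strictly positive function against a positive area form, so the result is strictly positive.

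The only delicate point, and arguably the main obstacle to pin down, is orientation consistency: one must ensure that the orientation on $\Sigma$ for which $\int_\Sigma\omega$ is computed agrees with the one induced by the symplectic form $\omega|_\Sigma$ (equivalently, with the transverse direction given by $-R/\|R\|$ together with the ambient orientation of $M$). With the opposite choice the sign would flip. Since Lemma \ref{geometrygravity} shows that $\omega|_\Sigma$ is a nondegenerate $2$-form on each leaf, it canonically orients $\Sigma$, and the statement should be read with this convention; under it, the positivity is immediate from the formula above.
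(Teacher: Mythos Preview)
Your proof is correct and follows essentially the same route as the paper: both rely on Proposition~\ref{omegaareaform} to identify $\omega|_\Sigma$ with a strictly positive multiple of the induced area form, so that positivity of the integral is immediate once the orientation is the one determined by $\omega$. The paper also remarks that compact symplectic manifolds cannot be exact, but this observation is redundant once Proposition~\ref{omegaareaform} is invoked, and your handling of the orientation issue is the same as the paper's.
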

\begin{proof}
Exact symplectic manifolds cannot be compact, $\Sigma$ is positively oriented with respect to the orientation induced by $\omega$, and the norm of $\omega$ is a positive function.   
\end{proof}

Because $-\mathrm{g}(R,R)^{-1/2}\imath_R\star 1$ is symplectic when restricted to $\Sigma$, and the mapping $\mathrm{j}$ defined by 
\begin{equation*}
\mathrm{g}\circ(\mathrm{j}\oplus\mathds{1})=-\frac{\imath_R\star 1}{\sqrt{\mathrm{g}(R,R)}}
\end{equation*}is a complex structure when restricted to the leaves of $\mathrm{ker}(\star\omega)$, one has a K\"{a}hler structure on $\Sigma$. 

As for the mean curvature of the leaves, they can be expressed in terms of $\omega$.

\begin{proposition}
The mean curvature of a leaf $\Sigma\subset M$ of $\mathrm{ker}(\star\omega)$ is $-\tfrac{1}{2}\star(\omega\wedge\ud(\star(\omega\wedge\star\omega))^{-1/2})$. 
\end{proposition}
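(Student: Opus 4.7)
The strategy is to re-express the scalar mean curvature as a divergence of the unit normal and then unwind everything using Lemma~\ref{hodgenorm1form}. From Proposition~\ref{omegaareaform} the oriented unit normal to $\Sigma$ is $N=-R/\sqrt{\mathrm{g}(R,R)}$, and because $R$ never vanishes on $M$ this $N$ is a \emph{global} unit vector field. For a hypersurface whose unit normal extends to an ambient unit vector field, the scalar mean curvature equals $H=-\tfrac{1}{2}\mathrm{div}(N)$, since the normal component of $\nabla_N N$ is automatically killed by $\mathrm{g}(N,N)=1$, so $\mathrm{div}_M(N)|_\Sigma=\mathrm{div}_\Sigma(N)=-\mathrm{tr}(\text{Weingarten})$.

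The next step converts the divergence into exterior calculus via $\mathrm{div}(X)\star 1=\ud\imath_X\star 1$. The identity $\imath_R\star 1=-\omega$ from Lemma~\ref{hodgenorm1form}, together with the scalar $1/\sqrt{\mathrm{g}(R,R)}$ factoring out of the contraction, yields
\begin{equation*}
\mathrm{div}(N)\star 1=\ud\!\left(\frac{\omega}{\sqrt{\mathrm{g}(R,R)}}\right)=\omega\wedge\ud\bigl(\mathrm{g}(R,R)^{-1/2}\bigr) \ ,
\end{equation*}
where $\ud\omega=0$ and the commutation between a $1$-form and a $2$-form on a $3$-manifold are used.

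Finally, Lemma~\ref{hodgenorm1form} also supplies $\mathrm{g}(R,R)=\star(\omega\wedge\star\omega)$; substituting into the expression above and applying $\star$ (using $\star^2=\mathds{1}$ so that $H=-\tfrac{1}{2}\star(\mathrm{div}(N)\star 1)$) produces exactly $-\tfrac{1}{2}\star(\omega\wedge\ud(\star(\omega\wedge\star\omega))^{-1/2})$.

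The main thing to be careful with is the bookkeeping of signs and orientations: one must verify that the normal singled out by Proposition~\ref{omegaareaform} is indeed the one for which $H=-\tfrac{1}{2}\mathrm{div}(N)$ (as opposed to the opposite convention), and then propagate the minus sign in $\imath_R\star 1=-\omega$ coherently through the Leibniz step. Once that is settled, the rest is a one-line application of the product rule combined with $\ud\omega=0$.
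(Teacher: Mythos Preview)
Your proof is correct and follows essentially the same route as the paper: identify the unit normal as $-R/\sqrt{\mathrm{g}(R,R)}$, express the mean curvature as $-\tfrac{1}{2}$ of its divergence, convert the divergence to $\star\ud$ acting on $\omega/\sqrt{\mathrm{g}(R,R)}$ via $\imath_R\star1=-\omega$, apply the Leibniz rule with $\ud\omega=0$, and substitute $\mathrm{g}(R,R)=\star(\omega\wedge\star\omega)$. The paper's version is just a terser write-up of exactly these steps; your additional remarks on why $H=-\tfrac{1}{2}\mathrm{div}(N)$ holds and on the sign bookkeeping are helpful elaborations rather than a different argument.
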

\begin{proof}
By computing the mean curvature as the divergence of a normal vector field to the leaf, and using $\ud\omega=0$ and $\mathrm{g}(R,R)=\star(\omega\wedge\star\omega)$ (cf. lemma \ref{hodgenorm1form}), 
\begin{align*}
-\frac{1}{2}\mathrm{div}\left(\frac{-R}{\sqrt{\mathrm{g}(R,R)}}\right) 
&=-\frac{1}{2}\star\ud\left(\frac{-\imath_R\star 1}{\sqrt{\mathrm{g}(R,R)}}\right) \\
&=-\frac{1}{2}\star\ud\left(\frac{\omega}{\sqrt{\star(\omega\wedge\star\omega)}}\right) \\
&=-\frac{1}{2}\star((\ud(\star(\omega\wedge\star\omega))^{-1/2})\wedge\omega) \ . 
\end{align*}
\end{proof}



\section{Harmonic function}

When the Hodge dual $\star\omega$ is exact, the vector field dual to it is the gradient of a harmonic function.

\begin{lemma}\label{harmonicgradient}
If there exists a function $\phi\in C^\infty(M)$ such that $\star\omega=\ud\phi$, then $R\in\mathfrak{X}(M)$ solving 
\begin{equation*}
\mathrm{g}(R,\boldsymbol{\cdot})=-\star\omega 
\end{equation*}is the gradient of a harmonic function.
\end{lemma}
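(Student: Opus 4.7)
The plan is to unpack the definition of $R$, identify it (up to sign) with the gradient of $\phi$, and then exploit the hypothesis $\ud\omega=0$ to show that $\phi$ is harmonic.

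First I would observe that since $\mathrm{g}(R,\boldsymbol{\cdot})=-\star\omega=-\ud\phi$, the vector field $R$ is precisely the metric dual of $-\ud\phi$, i.e.\ $R=-\mathrm{grad}(\phi)=\mathrm{grad}(-\phi)$. So it suffices to verify that $-\phi$ (equivalently $\phi$) is a harmonic function with respect to $\mathrm{g}$.

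Next I would translate the closedness of $\omega$ into a statement about $\phi$. Applying the Hodge star to the hypothesis $\star\omega=\ud\phi$ and using $\star^2=\mathds{1}$ on $1$-forms gives $\omega=\star\ud\phi$. Imposing $\ud\omega=0$ then yields $\ud\star\ud\phi=0$, and applying $\star$ once more produces $\star\ud\star\ud\phi=0$. Since on a $3$-dimensional oriented riemannian manifold the Laplace--Beltrami operator on functions coincides (up to sign conventions) with $-\star\ud\star\ud$, this is exactly the harmonicity condition $\Delta\phi=0$.

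The calculation is essentially a one-line manipulation, and I do not expect any real obstacle beyond choosing the sign convention for the codifferential $\delta=\pm\star\ud\star$ consistently; once that is fixed, the implication $\ud\omega=0 \Rightarrow \Delta\phi=0$ is immediate, and the identification $R=\mathrm{grad}(-\phi)$ closes the argument.
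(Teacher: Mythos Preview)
Your proposal is correct and follows essentially the same route as the paper: identify $R$ as the metric dual of $-\ud\phi$, then compute $\Delta\phi=\star\ud\star\ud\phi=\star\ud\star(\star\omega)=\star\ud\omega=0$ using $\star^2=\mathds{1}$ and the standing assumption $\ud\omega=0$. The only cosmetic difference is that the paper writes the chain $\Delta\phi=(\star\ud\star)(\star\omega)=\star\ud\omega$ directly rather than first passing through $\omega=\star\ud\phi$.
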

\begin{proof}
On the one hand,
\begin{equation*}
\mathrm{g}(R,\boldsymbol{\cdot})=-\star\omega=\ud(-\phi) \ . 
\end{equation*}On the other hand,
\begin{align*}
\Delta\phi&=(\star\ud\star)\circ\ud\phi=(\star\ud\star)(\star\omega) \nonumber \\
&=(\star\ud)\star^2\omega=\star\ud\omega=0 \ . 
\end{align*}
\end{proof}

As a consequence, level sets of harmonic functions inherit a symplectic structure. 

\begin{proposition}\label{gravitationalleaves}
The kernel of $\star\omega$ is the orthogonal complement of the distribution generated by the vector field $R$ (dual to $-\star\omega$ with respect to $\mathrm{g}$), and its symplectic leaves are the level sets of any harmonic function which the gradient is $R$.
\end{proposition}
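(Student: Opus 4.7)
The plan is to split the proposition into its two independent assertions --- the orthogonality statement and the level-set identification --- and to reduce each to something already established in the excerpt. The first assertion should be essentially free: Lemma \ref{geometrygravity} has already produced
\begin{equation*}
\mathrm{ker}(\star\omega) = \{Y \in \mathfrak{X}(M) \ ; \ \mathrm{g}(R,Y) = 0\} \ ,
\end{equation*}
which, by definition, is the $\mathrm{g}$-orthogonal complement of the rank-one distribution generated by $R$; and Lemma \ref{hodgenorm1form} guarantees that $R$ is nonvanishing, so this distribution is genuinely rank one at every point.

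For the level-set assertion, I would start by unpacking what it means for a harmonic function $h \in C^\infty(M)$ to have gradient $R$: the defining equation $\mathrm{g}(R,\boldsymbol{\cdot}) = -\star\omega$ immediately gives $\ud h = -\star\omega$, so $\star\omega = \ud(-h)$ is exact and Lemma \ref{harmonicgradient} applies consistently (with $\phi = -h$). Because $R$ never vanishes, $h$ has no critical points, so every level set $\{h = c\}$ is a smooth embedded hypersurface whose tangent space at each point equals $(\nabla h)^\perp = R^\perp = \mathrm{ker}(\star\omega)$ by the first part. The connected components of such level sets are therefore integral manifolds of the integrable distribution $\mathrm{ker}(\star\omega)$ from Lemma \ref{geometrygravity}, hence coincide with its leaves; on each of them $\omega$ restricts to a symplectic structure by the same lemma.

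The main --- and essentially only --- subtle step is the passage from pointwise equality of tangent spaces to the identification of level sets with leaves as subsets of $M$. This is a standard maximality argument for integral manifolds of an involutive distribution, combined with the fact that regular level sets of a smooth function without critical points are smooth hypersurfaces, so it deserves at most a sentence; no new geometric input is required beyond what is already contained in Lemmas \ref{hodgenorm1form}, \ref{geometrygravity}, and \ref{harmonicgradient}.
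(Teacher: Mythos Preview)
Your proposal is correct and follows the same approach as the paper: the paper's proof consists solely of the chain of equalities
\begin{equation*}
\mathrm{ker}(\star\omega)=\{Y \ ; \ \star\omega(Y)=0\}=\{Y \ ; \ \mathrm{g}(R,Y)=0\}=\{Y \ ; \ \ud\phi(Y)=0\} \ ,
\end{equation*}
which is exactly the content you extract from Lemmas~\ref{geometrygravity} and~\ref{harmonicgradient}. Your version is in fact more careful than the paper's, since you spell out the regularity of the level sets and the maximality step identifying them with the leaves, points the paper leaves implicit.
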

\begin{proof}
From the previous definitions, 
\begin{align*}
\mathrm{ker}(\star\omega)&:=\{Y\in\mathfrak{X}(M) \ ; \ \star\omega(Y)=0\} \nonumber \\
&=\{Y\in\mathfrak{X}(M) \ ; \ \mathrm{g}(R,Y)=0\} \nonumber \\
&=\{Y\in\mathfrak{X}(M) \ ; \ \ud\phi(Y)=0\} \ .
\end{align*}
\end{proof}

This is a good opportunity to summarise the results. Starting with a $3$-dimensional orientable riemannian manifold $(M,\mathrm{g})$, any harmonic function $\phi\in C^\infty(M)$ has the property that its level sets are symplectic manifolds with respect to the $2$-form $\star\ud\phi$, cf. lemma \ref{geometrygravity} and proposition \ref{gravitationalleaves}; moreover, $\star\ud\phi$ induces an orientation on each level set, and its integral over the compact ones must be positive (cf. proposition \ref{positivemass2}). Those results are even more relevant when considering solutions of Poisson's equation, instead of Laplace's equation.


\section{Poisson's equation}

Let $\rho\in C^\infty(M)$ (or a distribution), then $\phi\in C^\infty(M)$ (which may not be smooth over the whole of $M$) satisfies the Poisson's equation with source $\rho$ whenever
\begin{equation*}
\Delta\phi=\rho \ . 
\end{equation*}By definition $\Delta\phi=\star\ud(\star\ud\phi)$; thus, $\rho\star1=\ud(\star\ud\phi)$ and Stokes's theorem gives, for any $3$-chain $V\subset M$,
\begin{equation*}
\int_V\rho\star1=\int_V\ud(\star\ud\phi)=\int_{\partial V}\star\ud\phi \ . 
\end{equation*}

Setting $J\subset M$ as the closure of the set where $\rho$ does not vanish (its support) and $M-J$ (the interior of $\rho^{-1}(\{0\})$), if $\Sigma\subset M-J$ is a $2$-cycle satisfying $\Sigma=\partial V$ on $M$, then 
\begin{equation*}
\int_V\rho\star 1=\int_{\Sigma}\star\ud\phi \ .
\end{equation*}Implying that the de Rham class $[\star\ud\phi]\in H^2(M-J)$ is predetermined by integrals of the source $\rho\in C^\infty(M)$ with respect to the volume $\star 1$.

This suggests another formulation to Poisson's equation. Instead of the initial data being a $3$-dimensional orientable riemannian manifold $(M,\mathrm{g})$ and a source function $\rho\in C^\infty(M)$ (or distribution), the information contained in $\rho$ is translated into a closed subset $J\subset M$ and an element of $H^2(M-J)$. And rather than looking for a function $\phi\in C^\infty(M)$ satisfying $\Delta\phi=\rho$, a solution would be a closed and coclosed form $\omega\in\Omega^2(M-J)$ such that $\star\omega$ is exact and $[\omega]\in H^2(M-J)$ coincides with the given de Rham class. The advantage of this formulation is apparent when $\rho$ is a distribution: for a solution $\phi$ of $\Delta\phi=\rho$ is not well defined over the support of $\rho$, whilst $\omega$ captures all pertinent information.

\begin{remark}
The subset where $(\star\ud\phi)\wedge(\ud\phi)$ fails to be a volume form needs not to be empty. Indeed, the $3$-form vanishes where the derivative of the harmonic function is zero; therefore, if $\star\ud\phi$ vanishes on an open subset, the harmonic function will be constant there, and $\star\ud\phi$ must be zero over the whole connected component containing it. Yet, lemma \ref{geometrygravity} and proposition \ref{positivemass2} are still valid for the points where $\omega\wedge\star\omega$ defines a volume form. 
\end{remark}


\section{Newtonian gravity}

Since newtonian gravity can be described in terms of a gravitational vector field determined by a function solving Poisson's equation, it is possible to exploit this alternative formulation to Poisson's equation to obtain an equivalent theory. 

Over a \textbf{spacelike region} described by a $3$-dimensional orientable riemannian manifold $(M,\mathrm{g})$ with gravitational sources located on a closed (and possibly not connected) subset $J\subset M$, the \textbf{gravitational field} is a closed differential $2$-form $\omega\in\Omega^2(M-J)$ such that, for every smooth singular 1-cycle $\gamma\subset M-J$, $\int_{\gamma}\star\omega=0$. The \textbf{gravitational mass} inside a $3$-dimensional spacelike region bounded by any gaussian surface $\Sigma$ is $\int_{\Sigma}\omega$. 

A \textbf{gaussian surface} $\Sigma\subset M-J$ is a $2$-dimensional compact, without boundary, and oriented submanifold such that $[\Sigma]\in H_2(M)$ is trivial. If $V\subset M$ is some $3$-dimensional submanifold satisfying $\partial V=\Sigma$, then the orientation of $\Sigma$ is the one induced by the orientation of the restriction of $\star 1$ to $V$ and used in Stokes theorem. 

\begin{remark}
The notion of a gaussian surface without a predetermined orientation renders it impossible to decide if the mass of a gravitational source is positive or negative: for a smooth singular $2$-cycle $\Sigma\subset M-J$ representing a generator $[\Sigma]\in H_2(M-J)$, the element $-[\Sigma]\in H_2(M-J)$ is also a generator and $\escalar{[\omega]}{-[\Sigma]}=-\int_{\Sigma}\omega$. 
\end{remark}

Given a \textbf{gravitational source distribution} $J\subset M$ and its respective gravitational mass, an element of $H^2(M-J)$ is fixed, and a representative $\omega$ will be the gravitational field for those sources whenever, for every smooth singular $1$-cycle $\gamma$, it holds $\int_{\gamma}\star\omega=0$.

Regarding dynamics, the \textbf{gravitational vector field} $R$ is the unique solution to $\mathrm{g}(R,\boldsymbol{\cdot})=-\star\omega$, using units where the gravitational constant equals $1/4\pi$. A test particle of gravitational mass $m$ under the influence of this gravitational field $\omega$ suffers a force given by $mR$. 


For illustration, taking $(M,\mathrm{g})$ to be the $3$-dimensional euclidean space and $
J=\{0\}\subset M=\CR^3$, using linear projections $x,y,z\in C^\infty(\CR^3)$ as coordinates
\begin{equation*}
\sigma:=\frac{1}{4\pi}\frac{z\ud x\wedge\ud y-y\ud x\wedge\ud z+x\ud y\wedge\ud z}{(x^2+y^2+z^2)^{\tfrac{3}{2}}}
\end{equation*}is coexact and its de Rham class generates $H^2(\CR^3-\{0\})$. Any element of $H^2(\CR^3-\{0\})$ associated to $m_0\in\CR$ through the isomorphism $H^2(\CR^3-\{0\})\cong\CR$ (induced by using $\{[\sigma]\}\subset H^2(\CR^3-\{0\})$ as a basis) can be represented by $\omega=m_0\sigma$, and if $\Sigma$ is any sphere centred at the origin oriented to be a gaussian surface, 
$\int_{\Sigma}\omega=m_0$. Supposing that $\omega$ is a gravitational field, its gravitational vector field, 
\begin{equation}\label{newtonsuniversallaw}
R=-\frac{m_0}{4\pi}\frac{x\depp{}{x}+y\depp{}{y}+z\depp{}{z}}{(x^2+y^2+z^2)^{\frac{3}{2}}} \ , 
\end{equation}coincides with the expected solution for the gravitational vector field generated by a particle of gravitational mass $m_0$ at the origin of the $3$-dimensional euclidean space; moreover, it is a multiple of the radial vector field, implying that the symplectic leaves of $\mathrm{ker}(\star\omega)$ are the spheres centred at the origin (cf. proposition \ref{gravitationalleaves}), i.e. $|m_0|=\int_{\Sigma}\omega>0$, because it is its symplectic area and the orientation of $\Sigma$ induced by $\omega$ will coincide with its original orientation only when $m_0>0$. 



\section{Assumptions and predictions}

Here are the assumptions made in the last section.

\begin{Cenumerate}
\item Physical space is described by a $3$-dimensional orientable riemannian manifold,
\item there exists a gravitational field described by a closed and coexact differential $2$-form,
\item the gravitational field is produced by sources (having gravitational mass) located on a closed subset of the physical space,
\item the gravitational mass ties to the differential $2$-form via integration over special subsets of the physical space (gaussian surfaces),
\item test particles interact with the source of a gravitational field through a vector field constructed out of the differential $2$-form and riemannian metric. 
\end{Cenumerate}

No sort of proof to under what conditions those assumptions are true, or if those conditions exist, is provide. Those are premises. However, they lead to predictions that can be tested and refuted. 

\begin{Cenumerate}
\item A test particle is attracted by a source of gravitational field with a force proportional to the product of their masses and inversely proportional to the square of the distance between their centres (acting along the line intersecting them both),
\item the gravitational field is linear with respect to gravitational sources, 
\item and Newton’s shell theorem is valid. 
\end{Cenumerate}

The first prediction follows from equation \ref{newtonsuniversallaw} (together with the fifth assumption), the second from the linearity of exterior derivatives, Hodge star operators, and relevant cohomology and homology groups, and the third from the topological invariance of cohomology and homology groups.   

Regarding the ability to test and refute those predictions, it is clear that the first is Newton's law of universal gravitation; hence, any experiment and observation that can test and refute newtonian gravity applies to the predictions of those new assumptions. In particular, said assumptions alone are not able to predict Mercury's orbit with better precision than Newton's theory. 

Whilst newtonian gravity has the first prediction as an assumption, the present article derives Newton's law of universal gravitation from a different set of assumptions. 

\begin{remark}
Although tempting, substituting gravitational by electric and mass by charge in the list of five assumptions will provide a limited theory describing Electrostatics phenomena. The reader must bear in mind that Electrostatics can be fully explained by Electromagnetism, a theory that must assume the physical space to be part of a set described by a $4$-dimensional lorentzian manifold; resulting in hyperbolic field equations ---drastically different from the 3-dimensional riemannian case.   
\end{remark}

\begin{remark}
Insisting in a parallel between Electrostatics and newtonian gravity, one might apply the assumptions of \cite{TheSolha} to gravity and reach the prediction that gravitational mass is an integral multiple of some fixed constant. Since there is evidence showing that this prediction is false, the logical conclusion is that Gravitation (being it relativistic or not) does not satisfy the assumptions of \cite{TheSolha}, and as a theory it must be different from Electromagnetism in its foundation.  
\end{remark}


\vspace{2em} 

\begin{acknowledgments}
This work was partially supported by PNPD/CAPES, the Basque government grant IT1094-16 of Spain, and Andalusia government grant P20\textunderscore01391 of Spain.
\end{acknowledgments}



\end{document}